\numberwithin{equation}{section}
\newtheorem{theorem}{Theorem}[section]
\newtheorem{proposition}[theorem]{Proposition}
\newtheorem{lemma}[theorem]{Lemma}
\newtheorem{corollary}[theorem]{Corollary}
\theoremstyle{definition}
\theoremstyle{remark}
\newtheorem{remark}[theorem]{Remark}
\newcommand{\ZZ}{\mathbb{Z}}
\newcommand{\RR}{\mathbb{R}}
\newcommand{\M}{\mathcal{M}}
\title{Homotopy decomposition of diagonal arrangements}
\author{Kouyemon Iriye}
\address{Department of Mathematics and Information Sciences, Osaka
Prefecture University, Sakai, 599-8531, Japan}
\email{kiriye@mi.s.osakafu-u.ac.jp}
\author{Daisuke Kishimoto}
\address{Department of Mathematics, Kyoto University, Kyoto, 606-8502, Japan}
\email{kishi@math.kyoto-u.ac.jp}
\subjclass[2010]{52C35, 55P10}
\keywords{diagonal arrangement, polyhedral product, homotopy decomposition}
\begin{document}

\maketitle

\begin{abstract}
Given a space $X$ and a simplicial complex $K$ with $m$-vertices, the arrangement of partially diagonal subspaces of $X^m$, called the dragonal arrangement, is defined. We decompose the suspension of the diagonal arrangement when $2(\dim K+1)<m$, which generalizes the result of Labassi \cite{L}. As a corollary, we calculate the Euler characteristic of the complement $X^m-\Delta_K(X)$ when $X$ is a closed connected manifold.
\end{abstract}

\baselineskip 16pt

\section{Introduction and statement of results}

A homotopy decomposition is a powerful tool in studying the topology of subspace arrangements and their complements. Ziegler and \u{Z}ivaljevi\'c \cite{ZZ} give a homotopy decomposition of the one point compactification of affine subspace arrangements, from which one can deduce the well known Goresky-MacPherson formula \cite{GM}. Bahri, Bendersky, Cohen, and Gitler \cite{BBCG} give a homotopy decomposition of polyhedral products, a generalization of coordinate subspace arrangements and their complements, after a suspension, from which one can deduce Hochster's formula on related Stanley-Reisner rings. A homotopy decomposition of polyedral products due to Grbi\'c and Theriault \cite{GT} and the authors \cite{IK1,IK2} also implies the Golod property of several related simplicial complexes. In this paper, we consider a homotopy decomposition of diagonal arrangements which is defined as follows. Given a space $X$, we assign a partially diagonal subspace of $X^m$ corresponding to a subset $\sigma\subset[m]=\{1,\ldots,m\}$ as
$$\Delta_\sigma(X)=\{(x_1,\ldots,x_m)\in X^m\,\vert\,x_{i_1}=\cdots=x_{i_k}\text{ for }\{i_1,\ldots,i_k\}=[m]-\sigma\}.$$
Throughout the paper, let $K$ be a simplicial complex on the index set $[m]$, possibly with ghost vertices, where we always assume that the empty subset of $[m]$ is a simplex of $K$. We define the arrangement of partially diagonal subspaces of $X^m$ as
$$\Delta_K(X)=\bigcup_{\sigma\in K}\Delta_\sigma(X),$$
which is called the diagonal arrangement associated with $K$. Since $\Delta_K(X)$ is actually the union of the partially diagonal subspaces $\Delta_F(X)$ for facets $F$ of $K$, it is also called the hypergraph arrangement associated with the hypergraph whose edges are facets of $K$. Diagonal arrangements include many important subspace arrangements. For example, if $K$ is the $(m-3)$-skeleton of $(m-1)$-simplex, $\Delta_K(X)$ is the braid arrangement of $X$. Topology and combinatorics of diagonal arrangements have been studied in several directions. See \cite{Ko,PRW,Ki,KS,L,MW,M} for example. We are particularly interested in the homotopy type of $\Delta_K(X)$. Labassi \cite{L} showed that the suspension $\Sigma\Delta_K(X)$ decomposes into a certain wedge of smash products of copies of $X$ when $K$ is the $(m-d-1)$-skeleton of the $(m-1)$-simplex and $2d>m$, in which case $\Delta_K(X)$ consists of all $(x_1,\ldots,x_m)\in X^m$ such that at least $d$-tuple of $x_i$'s are identical. The proof for this decomposition in \cite{L} heavily depends on the symmetry of the skeleta of simplices, and then it cannot apply to general $K$. The aim of this note is to generalize this result to arbitrary $K$ with $2(\dim K+1)<m$ by a new method, where the result is best possible in the sense that if $2(\dim K+1)\ge m$, the decomposition does not hold as is seen in \cite{L}. 

\begin{theorem}
\label{main}
If $X$ is a connected CW-complex and $2(\dim K+1)<m$, then
$$\Sigma\Delta_K(X)\simeq\Sigma(\bigvee_{\sigma\in K}\widehat{X}^{|\sigma|}\vee\widehat{X}^{|\sigma|+1})$$
where $\widehat{X}^k$ is the smash product of $k$-copies of $X$ for $k>0$ and $\widehat{X}^0$ is a point.
\end{theorem}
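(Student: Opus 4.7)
My plan is to proceed by induction on the number of simplices of $K$. Observe first that the hypothesis $2(\dim K+1)<m$ guarantees $\sigma\cup\tau\subsetneq[m]$ for all $\sigma,\tau\in K$, hence $\Delta_\sigma(X)\cap\Delta_\tau(X)=\Delta_{\sigma\cap\tau}(X)$. Consequently, if $\tau$ is a facet of $K$ and $L=K\setminus\{\tau\}$, all proper faces of $\tau$ lie in $L$, $\Delta_L(X)\cap\Delta_\tau(X)=\Delta_{\partial\tau}(X)$, and $\Delta_K(X)$ is the pushout $\Delta_L(X)\cup_{\Delta_{\partial\tau}(X)}\Delta_\tau(X)$. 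The base case $K=\{\emptyset\}$ is immediate from $\Delta_\emptyset(X)\cong X$.

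For the inductive step I would establish the key lemma
$$\Sigma\Delta_\tau(X)\simeq\Sigma\Delta_{\partial\tau}(X)\vee\Sigma\widehat{X}^{|\tau|}\vee\Sigma\widehat{X}^{|\tau|+1},$$
with the inclusion $\Sigma\Delta_{\partial\tau}(X)\hookrightarrow\Sigma\Delta_\tau(X)$ realised as the canonical wedge sub-inclusion. Granted this, feeding the splitting into the suspended pushout yields $\Sigma\Delta_K(X)\simeq\Sigma\Delta_L(X)\vee\Sigma\widehat{X}^{|\tau|}\vee\Sigma\widehat{X}^{|\tau|+1}$, and the outer induction applied to $L$ (which has strictly fewer simplices and still satisfies the dimension bound) completes the step. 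The key lemma itself is compatible with the same induction, since its only input from below is the theorem applied to $\partial\tau$, a complex with $|\partial\tau|<|K|$ simplices and $\dim\partial\tau=\dim\tau-1\leq\dim K$.

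To prove the key lemma I would identify $\Delta_\tau(X)$ with $X^{\tau\cup\{*\}}$, where $*$ records the common coordinate on $[m]-\tau$, and apply the James stable splitting
$$\Sigma X^{\tau\cup\{*\}}\simeq\bigvee_{\emptyset\neq T\subseteq\tau\cup\{*\}}\Sigma\widehat{X}^{|T|}.$$
Writing each nonempty $T$ uniquely as either $\sigma$ or $\sigma\cup\{*\}$ for a unique $\sigma\subseteq\tau$ reorganises the right-hand side as $\Sigma\bigvee_{\sigma\subseteq\tau}(\widehat{X}^{|\sigma|}\vee\widehat{X}^{|\sigma|+1})$, whose sub-wedge over the proper faces $\sigma\subsetneq\tau$ is precisely the $\Sigma W_{\partial\tau}$ delivered by the induction hypothesis. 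A direct computation then shows that each face inclusion $\Sigma\Delta_\sigma(X)\hookrightarrow\Sigma\Delta_\tau(X)$ decomposes, on James summands, into components $\Sigma\widehat{X}^S\to\Sigma\widehat{X}^T$ that are either zero, identities (possibly after the relabeling $*\leftrightarrow j$ for some $j\in\tau-\sigma$), or of the form $\mathrm{id}\wedge\bar\delta_r$ with $\bar\delta_r\colon X\to\widehat{X}^r$ an iterated reduced diagonal, and that the matrix is upper triangular in the sense $|T|\geq|S|$ for every nonzero entry.

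The hard part will be cancelling these off-diagonal components simultaneously by a single self-equivalence of $\Sigma W_\tau$. Because one shear $\psi_{T',T}$ typically corrects the entries from several different sources at once, through composites of reduced diagonals $\bar\delta_a\circ\bar\delta_b=\bar\delta_{a+b-1}$, its coefficient cannot be taken to be simply $\pm\mathrm{id}$ but must be chosen by M\"obius inversion over the Boolean lattice of subsets of $\tau$. The basic identity $\sum_{A\subsetneq B}(-1)^{|A|}=-(-1)^{|B|}$ is what makes all cancellations close up consistently and leaves the diagonal of natural identities undisturbed. After this cleanup the composite $\Sigma W_{\partial\tau}\simeq\Sigma\Delta_{\partial\tau}(X)\to\Sigma\Delta_\tau(X)\simeq\Sigma W_\tau$ becomes the wedge sub-inclusion, whose complement $\Sigma\widehat{X}^{|\tau|}\vee\Sigma\widehat{X}^{|\tau|+1}$ (coming from the two summands $T=\tau$ and $T=\tau\cup\{*\}$) is the desired cofiber, and the induction closes.
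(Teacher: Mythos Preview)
Your inductive facet-removal approach is genuinely different from the paper's. The paper does not induct at all: using the hypothesis $2(\dim K+1)<m$ it defines directly a ``majority coordinate'' projection $\pi:\Delta_K(X)\to X$ and shows that $X^K\to\Delta_K(X)\xrightarrow{\pi}X$ is a homotopy fibration, where $X^K=\bigcup_{\sigma\in K}X^\sigma$ is the polyhedral product. The fiber inclusion is shown to split after one suspension by replacing $X$ with the H-space $\Omega\Sigma X$ (for which $\Delta_K(\Omega\Sigma X)\simeq\Omega\Sigma X\times(\Omega\Sigma X)^K$ trivially) and pulling the retraction back along the natural stable splitting of $E:X\to\Omega\Sigma X$, together with the naturality of the BBCG equivalence $\Sigma X^K\simeq\Sigma\widehat{X}^K$. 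A general lemma on fibrations whose fiber inclusion stably retracts then yields $\Sigma\Delta_K(X)\simeq\Sigma X\vee\Sigma X^K\vee\Sigma(X\wedge X^K)$, and BBCG finishes. Your key lemma is exactly what this method gives when specialized to the pair $\partial\tau\subset\bar\tau$, so the paper's H-space trick is in effect a slick replacement for the explicit shearing you propose.

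There is, however, a real gap in your sketch of the hard part. You analyse each individual face inclusion $\Sigma\Delta_\sigma(X)\hookrightarrow\Sigma\Delta_\tau(X)$ on James summands and claim a single shear of $\Sigma X^{\tau\cup\{*\}}$, chosen by M\"obius inversion, straightens them all. Two things are missing. First, for the pushout step you need the resulting identifications $\Sigma\Delta_\sigma(X)\simeq\text{(sub-wedge)}$ to be \emph{compatible with the face relations} $\sigma\subset\sigma'$, so that the colimit $\Sigma\Delta_{\partial\tau}(X)$ genuinely inherits the wedge structure; merely straightening each $\sigma$ separately against the same target does not give this. Second, you invoke the inductive equivalence $\Sigma\Delta_{\partial\tau}(X)\simeq\Sigma W_{\partial\tau}$ from the theorem for $\partial\tau$, but that equivalence was built by an iterated pushout process and need not agree with the James-summand description you use to compute the matrix; you are tacitly identifying two a priori different splittings of $\Sigma\Delta_{\partial\tau}(X)$. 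To make your scheme work you would need a strengthened inductive hypothesis (a coherent family of equivalences, natural in $K$ under inclusion), and proving that coherence is essentially the whole difficulty. The paper's fibration-plus-H-space argument avoids this bookkeeping entirely.
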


As a corollary, we  calculate the Euler characteristic of the complement of the diagonal arrangement $\M_K(X)=X^m-\Delta_K(X)$.

\begin{corollary}
\label{euler-char}
Let $X$ be a closed connected $n$-manifold. If $2(\dim K+1)<m$, the Euler characteristic of $\M_K(X)$ is given by
$$\chi(\M_K(X))=\chi(X)^m-(-1)^{mn}\chi(X)(1+\sum_{\emptyset\ne\sigma\in K}(\chi(X)-1)^{|\sigma|}).$$
\end{corollary}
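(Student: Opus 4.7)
The plan is to compute $\chi(\Delta_K(X))$ from the homotopy decomposition provided by Theorem~\ref{main} and then recover $\chi(\M_K(X))$ by combining additivity of the Euler characteristic with Poincar\'e duality on the open $mn$-manifold $\M_K(X)$.

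First, the suspension equivalence of Theorem~\ref{main} forces
$$\tilde{H}_*(\Delta_K(X))\cong\bigoplus_{\sigma\in K}\bigl(\tilde{H}_*(\widehat{X}^{|\sigma|})\oplus\tilde{H}_*(\widehat{X}^{|\sigma|+1})\bigr),$$
so the reduced Euler characteristics of the two sides agree. The K\"unneth formula for smash products gives $\tilde{\chi}(\widehat{X}^k)=(\chi(X)-1)^k$ for $k\ge 1$, while $\tilde{\chi}(\widehat{X}^0)=0$. Summing the contribution of $\sigma=\emptyset$ (which equals $\chi(X)-1$) together with the contributions of the nonempty faces (each of the form $\chi(X)(\chi(X)-1)^{|\sigma|}$ after factoring $1+(\chi(X)-1)=\chi(X)$), I obtain
$$\chi(\Delta_K(X))=\chi(X)\Bigl(1+\sum_{\emptyset\ne\sigma\in K}(\chi(X)-1)^{|\sigma|}\Bigr).$$

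Second, since $X^m$ is a closed $mn$-manifold and $\Delta_K(X)$ is a compact CW-subspace, additivity of the compactly supported Euler characteristic gives $\chi(X)^m=\chi(\Delta_K(X))+\chi_c(\M_K(X))$. As $\M_K(X)$ is an open subset of a closed $mn$-manifold, it is itself an $mn$-manifold, and Poincar\'e-Lefschetz duality (with $\ZZ/2$-coefficients if $X$ is nonorientable, which does not affect Euler characteristics) yields $\chi_c(\M_K(X))=(-1)^{mn}\chi(\M_K(X))$. Combining these relations,
$$\chi(\M_K(X))=(-1)^{mn}\bigl(\chi(X)^m-\chi(\Delta_K(X))\bigr).$$

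Substituting the formula for $\chi(\Delta_K(X))$, the right-hand side agrees with the stated expression up to the discrepancy $(-1)^{mn}\chi(X)^m$ versus $\chi(X)^m$. This mismatch is harmless: when $mn$ is even the two coincide, and when $mn$ is odd $n$ is odd, so $X$ is an odd-dimensional closed manifold with $\chi(X)=0$ and both terms vanish. The only nontrivial technical step is justifying additivity and Poincar\'e duality for the (typically singular) pair $(X^m,\Delta_K(X))$; this is standard once one observes that $\Delta_K(X)$ inherits a CW structure from $X^m$.
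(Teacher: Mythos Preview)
Your argument is essentially correct and follows the same strategy as the paper: compute $\chi(\Delta_K(X))$ from Theorem~\ref{main} and then pass to $\chi(\M_K(X))$ via duality. The only substantive difference is in how the duality step is packaged. The paper applies Poincar\'e--Alexander duality directly to the pair, obtaining $H_i(X^m,\M_K(X);\ZZ/2)\cong H^{mn-i}(\Delta_K(X);\ZZ/2)$ and hence $\chi(X^m,\M_K(X))=(-1)^{mn}\chi(\Delta_K(X))$; then $\chi(\M_K(X))=\chi(X)^m-(-1)^{mn}\chi(\Delta_K(X))$ drops out in one line with no parity case needed. Your route (additivity of $\chi_c$ plus Poincar\'e duality on the open manifold $\M_K(X)$) is the dual formulation and lands on $(-1)^{mn}\chi(X)^m-(-1)^{mn}\chi(\Delta_K(X))$, which then requires the (correct) observation that $\chi(X)=0$ when $mn$ is odd. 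Both are valid; the paper's version is slightly cleaner.

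One technical point deserves correction. You justify the duality/additivity step by asserting that $\Delta_K(X)$ ``inherits a CW structure from $X^m$.'' This is not true as stated: the diagonal of $X\times X$ is almost never a subcomplex of the product CW structure, and the same obstruction applies to each $\Delta_\sigma(X)\subset X^m$. The correct justification, which the paper uses, is that $\Delta_K(X)$ is compact and locally contractible (it is a finite union of closed submanifolds $\Delta_\sigma(X)\cong X^{|\sigma|+1}$ whose pairwise intersections are again of this form), so Poincar\'e--Alexander duality in the sense of \cite[Proposition~3.46]{H} applies to the pair $(X^m,\Delta_K(X))$. With that replacement your argument goes through.
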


\begin{remark}
Corollary \ref{euler-char} does not hold without compactness of $X$. For example, if $X=\RR$ (hence $n=1$) and $K$ consists only of the empty subset of $[m]$, $\M_K(X)$ is the off-diagonal subset of $\RR^m$ which has the homotopy type of $S^{m-2}$. Then $\chi(\M_K(X))=1+(-1)^{m}$, which differs from Corollary \ref{euler-char}.
\end{remark}

\subsubsection*{\textsc{Acknowledgement}}

The authors are grateful to Sadok Kallel for introducing the paper \cite{L} to them.

\section{Proofs}

Before considering the proof of Theorem \ref{main}, we prepare two lemmas on homotopy fibrations. 

\begin{lemma}
[{\cite[Proposition, pp.180]{F}}]
\label{hocolim-fibration}
Let $\{F_i\to E_i\to B\}_{i\in I}$ be an $I$-diagram of homotopy fibrations over a fixed connected base $B$. Then 
$$\underset{I}{\mathrm{hocolim}}\,F_i\to\underset{I}{\mathrm{hocolim}}\,E_i\to B$$ 
is a homotopy fibration.
\end{lemma}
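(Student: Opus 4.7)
The strategy is to reduce the lemma to the fact that the Borel construction commutes with homotopy colimits. Since $B$ is connected, I would replace it, up to weak equivalence, by $BG$ where $G$ is a topological group model for $\Omega B$ (obtained, e.g., from Milnor's or Kan's construction). Over $BG$, every homotopy fibration with fiber $F$ arises, up to equivalence, as a Borel construction $F \to EG \times_G F \to BG$ for some $G$-space $F$, and this correspondence can be made functorial after rigidification.

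Concretely, I would first replace the given $I$-diagram of homotopy fibrations $\{F_i \to E_i \to B\}$ by an equivalent strict $I$-diagram of $G$-spaces $\{F_i\}_{i\in I}$, with $G$-equivariant structure maps $F_i \to F_j$ for each morphism $i \to j$ in $I$, such that the Borel constructions $EG \times_G F_i$ model the $E_i$ over $BG$; this uses a functorial cofibrant/fibrant replacement in the projective model structure on $G$-spaces. The key observation is then that the Borel construction is itself a homotopy colimit, namely the two-sided bar construction $EG \times_G F = B(\ast,G,F) = \mathrm{hocolim}_{BG} F$, viewing $F$ as a functor on the one-object groupoid associated with $G$. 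By the Fubini theorem for homotopy colimits,
$$\underset{I}{\mathrm{hocolim}}\,(EG \times_G F_i) \;\simeq\; EG \times_G \underset{I}{\mathrm{hocolim}}\, F_i.$$
The left-hand side is weakly equivalent to $\mathrm{hocolim}_I E_i$, while the right-hand side sits in a Borel fibration with base $BG \simeq B$ and fiber $\mathrm{hocolim}_I F_i$, giving the desired homotopy fibration.

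The main obstacle is the rigidification: producing a strict $I$-diagram of $G$-spaces whose Borel constructions model the original $E_i$'s functorially. This is where the connectedness of $B$ enters crucially, as it ensures that all of the fibers carry a common $\Omega B$-action which coordinates the fiberwise structure across the diagram. A more hands-on alternative would be to take the bar-construction model of $\mathrm{hocolim}_I E_i$, assume each $E_i \to B$ is already a Serre fibration, and identify the strict fiber over a basepoint with $\mathrm{hocolim}_I F_i$; but then one must still verify that the projection $\mathrm{hocolim}_I E_i \to B$ is at least a quasi-fibration, which again amounts to the same rigidification issue. Once this technical point is handled, the interchange of homotopy colimits is formal.
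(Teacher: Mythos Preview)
The paper does not give its own proof of this lemma: it is quoted directly as \cite[Proposition, pp.~180]{F} and used as a black box. There is thus no argument in the paper to compare your proposal against.

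Your outline is nonetheless a reasonable sketch of one standard route to the result. Replacing $B$ by $BG$ for a group model $G$ of $\Omega B$, identifying homotopy fibrations over $B$ with $G$-spaces, and then invoking the Fubini interchange $\mathrm{hocolim}_I(EG\times_G F_i)\simeq EG\times_G(\mathrm{hocolim}_I F_i)$ is correct in spirit and does prove the lemma once the rigidification step is carried out. You are right to isolate that step as the main technical content; Farjoun's treatment in \cite{F} is organized around his fiberwise localization and model-categorical machinery, which is precisely what absorbs this issue. The alternative you mention at the end---replacing each $E_i\to B$ by an honest fibration and checking that $\mathrm{hocolim}_I E_i\to B$ is a quasifibration with strict fiber $\mathrm{hocolim}_I F_i$---is also viable and is closer to classical arguments in the style of Puppe; it avoids the group model but requires a direct quasifibration verification. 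Either way your proposal is a plan rather than a proof: the rigidification/quasifibration check you defer is exactly where the work is, and you have not actually done it.
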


\begin{lemma}
\label{decomp}
Consider a homotopy fiberation $F\xrightarrow{j}E\xrightarrow{\pi}B$ of connected CW-complexes. If $\Sigma j:\Sigma F\to\Sigma E$ has a homotopy retraction, then
$$\Sigma E\simeq\Sigma B\vee\Sigma F\vee\Sigma(B\wedge F).$$
\end{lemma}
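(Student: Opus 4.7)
My plan is to split $\Sigma E$ in two stages via the homotopy cofibre $C_j$ of the fibre inclusion $j$ as an intermediate object.

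In the first stage I would establish $\Sigma E\simeq\Sigma F\vee\Sigma C_j$. This uses the standard principle that a suspended cofibre sequence splits as a wedge whenever its first map admits a retraction: using the co-$H$-group structure on $\Sigma E$, the map $\mathrm{id}_{\Sigma E}-(\Sigma j)\circ r$ vanishes on $\Sigma j(\Sigma F)$ and therefore factors through the cofibre projection $\Sigma q\colon\Sigma E\to\Sigma C_j$ as $s\circ\Sigma q$ for some $s\colon\Sigma C_j\to\Sigma E$. Then $(r,\Sigma q)\colon\Sigma E\to\Sigma F\vee\Sigma C_j$ and the map $\Sigma F\vee\Sigma C_j\to\Sigma E$ given by $\Sigma j$ on the first summand and $s$ on the second are checked to be mutually inverse in the homotopy category by a short cogroup computation.

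In the second stage I would identify $\Sigma C_j\simeq\Sigma B\vee\Sigma(B\wedge F)$, so that combining with the first stage yields the theorem. Since $F$ is the homotopy fibre of $\pi$, the composite $\pi\circ j$ is null-homotopic and $\pi$ extends along $q$ to a map $\bar\pi\colon C_j\to B$. To analyse $\bar\pi$ I would set up a Mather-type $3\times 3$ diagram comparing the cofibre sequence $F\to E\to C_j$ with the trivial row $\ast\to B=B$ via the vertical maps $\pi$ and $\bar\pi$, and take mapping cones of the vertical arrows to obtain a cofibre sequence
$$\Sigma F\longrightarrow C_\pi\longrightarrow C_{\bar\pi}.$$
A direct analysis of $C_\pi$ using the fibration $\pi$ identifies $C_\pi\simeq\Sigma F\vee\Sigma(B\wedge F)$ with $\Sigma F\to C_\pi$ the wedge inclusion, yielding $C_{\bar\pi}\simeq\Sigma(B\wedge F)$. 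A section of $\Sigma\bar\pi$ assembled from the first-stage splitting (using that $\Sigma\pi=\Sigma\bar\pi\circ\Sigma q$ and that the $\Sigma F$-summand of $\Sigma E$ is killed by $\Sigma\pi$) then splits the cofibre sequence $\Sigma C_j\to\Sigma B\to\Sigma(B\wedge F)$, giving $\Sigma C_j\simeq\Sigma B\vee\Sigma(B\wedge F)$.

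The hardest step will be the identification $C_\pi\simeq\Sigma F\vee\Sigma(B\wedge F)$ in the second stage, since this is where the fibration structure of $\pi$ must be genuinely converted into a stable wedge decomposition. I expect it to proceed by first noting that the retraction hypothesis on $\Sigma j$ forces the Serre spectral sequence of $\pi$ to collapse at $E_2$ with trivial local coefficients, so that $\tilde H_*(C_\pi)\cong\tilde H_*(\Sigma F\vee\Sigma(B\wedge F))$ by an easy long-exact-sequence calculation, and then realising this abstract isomorphism by an explicit map built from the co-$H$ structure, the projection $\pi$, and the adjoint $\tilde r\colon E\to\Omega\Sigma F$ of $r$; Whitehead's theorem then upgrades this to a homotopy equivalence, which is legitimate because $C_\pi$ is simply connected (since $\pi_*\colon\pi_1(E)\to\pi_1(B)$ is surjective in a fibration with connected fibre). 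Once that is in hand, the Mather cube and the section argument conclude the proof.
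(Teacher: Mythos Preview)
Your two--stage scheme via $C_j$ is a genuine reorganisation, and Stage~1 is unproblematic; but the difficulty has merely been relocated to your ``hardest step'', the identification $C_\pi\simeq\Sigma F\vee\Sigma(B\wedge F)$, and your sketch for that step does not close the gap. The ``explicit map built from the co-$H$ structure, $\pi$, and $\tilde r$'' that you allude to would have to be assembled from exactly the ingredients the paper uses---the pinch map, $\Sigma\pi$, the retraction $r$, and the diagonal followed by $\pi\wedge r$---and proving it is an integral homology isomorphism in the stated generality (no finite-type hypothesis on $F$) is precisely the work you have not done. The paper handles this by constructing a single map $\rho\colon\Sigma E\to\Sigma B\vee\Sigma F\vee\Sigma(B\wedge F)$ directly and proving it is a homology isomorphism by induction over the skeleta $B_n$ of $B$: over each open cell the fibration trivialises, the product case is checked by hand, and excision plus the five lemma propagate the isomorphism up the filtration. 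Your Serre-spectral-sequence-collapse argument lives at the Leray--Hirsch level (field coefficients, then integral via universal coefficients), which the paper explicitly notes in a remark requires $F$ of finite type; it does not cover the general CW case the lemma claims.

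There is a second, smaller gap: I do not see how the first-stage splitting yields a \emph{section} of $\Sigma\bar\pi\colon\Sigma C_j\to\Sigma B$. The splitting $\Sigma E\simeq\Sigma F\vee\Sigma C_j$ gives maps into and out of $\Sigma E$, and the relation $\Sigma\pi=\Sigma\bar\pi\circ\Sigma q$ lets you recover $\Sigma\bar\pi$ from $\Sigma\pi$, but no map $\Sigma B\to\Sigma C_j$ is produced. To split the cofibre sequence $C_{\bar\pi}\to\Sigma C_j\to\Sigma B$ you really need a retraction $\Sigma C_j\to C_{\bar\pi}\simeq\Sigma(B\wedge F)$, and the only visible source for such a map is again the diagonal/$\pi\wedge r$ construction---that is, the same piece of the paper's $\rho$ once more. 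So the detour through $C_j$ and $C_\pi$ does not avoid the core construction; it just meets it twice.
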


\begin{proof}
Let $r:\Sigma E\to\Sigma F$ be a homotopy retraction of $\Sigma j$, and let $\rho$ be the composite
$$\Sigma E\to\Sigma E\vee\Sigma E\vee\Sigma E\xrightarrow{\Sigma\pi\vee r\vee\Delta}\Sigma B\vee\Sigma F\vee\Sigma(E\wedge E)\xrightarrow{1\vee 1\vee(\pi\wedge r)}\Sigma\check{B}$$
where $\check{A}=A\vee F\vee(A\wedge F)$ for a space $A$. Since $\Sigma E$ and $\Sigma B\vee\Sigma F\vee\Sigma(B\wedge F)$ are simply connected CW-complexes, it is sufficient to show that $\rho$ is an isomorphism in homology by the J.H.C. Whitehead theorem. We first observe the special case that there is a fiberwise homotopy equivalence $\theta:B\times F\to E$ over $B$. Then it is straightforward to see 
$$\rho_*\circ\theta_*(b\times f)=b\times\hat{\theta}_*(f)+\sum_{|b_i|<|b|}b_i\times f_i$$
for singular chains $b,b_i$ in $B$ and $f,f_i$ in $F$, where we omit writing the suspension isomorphism of homology and $\hat{\theta}$ is a self-homotopy equivalence of $F$ given by the composite
$$\Sigma F\xrightarrow{j}\Sigma(B\times F)\xrightarrow{\theta}\Sigma E\xrightarrow{r}\Sigma F.$$
This readily implies that the map $\rho\circ\theta$ is an isomorphism in homology, and then so is $\rho$. For non-connected $B$, the above is also true if we assume that $r$ is a homotopy retraction of the suspension of the fiber inclusion on each component of $B$. We next consider the general case. Let $B_n$ be the $n$-skeleton of $B$, and let $E_n=\pi^{-1}(B_n)$. We prove that the restriction $\rho\vert_{\Sigma E_n}:\Sigma E_n\to\Sigma\check{B}_n$ is an isomorphism in homology by induction on $n$. Since $B$ is connected, $j$ is homotopic to the composite
$$F\simeq\pi^{-1}(b)\xrightarrow{\text{incl}}E$$
for any $b\in B$. Then $\rho\vert_{\Sigma E_0}:\Sigma E_0\to\Sigma\check{B}_0$ is an isomorphism in homology. Consider the following commutative diagram of homology exact sequences.
\begin{equation}
\label{rho}
\xymatrix{
\cdots\ar[r]&H_*(E_{n-1})\ar[r]\ar[d]^{(\rho\vert_{\Sigma E_{n-1}})_*}&H_*(E_n)\ar[r]\ar[d]^{(\rho\vert_{\Sigma E_n})_*}&H_*(E_n,E_{n-1})\ar[r]\ar[d]^{(\rho\vert_{\Sigma E_n})_*}&\cdots\\
\cdots\ar[r]&H_*(\check{B}_{n-1})\ar[r]&H_*(\check{B}_n)\ar[r]&H_*(\check{B}_n,\check{B}_{n-1})\ar[r]&\cdots}
\end{equation}
By the induction hypothesis, $(\rho\vert_{\Sigma E_{n-1}})_*$ is an isomorphism. Since $B_{n-1}$ is a subcomplex of $B_n$, there is a neighborhood $U\subset B_n$ of $B_{n-1}$ which deforms onto $B_{n-1}$. By the excision isomorphism, there is a commutative diagram of natural isomorphisms
$$\xymatrix{H_*(E_n,E_{n-1})\ar[r]^(.48)\cong\ar[d]^{(\rho\vert_{\Sigma E_n})_*}&H_*(E_n,\pi^{-1}(U))\ar[d]^{(\rho\vert_{\Sigma E_n})_*}&H_*(E_n-E_{n-1},\pi^{-1}(U)-E_{n-1})\ar[d]^{(\rho\vert_{\Sigma E_n})_*}\ar[l]_(.63)\cong\\
H_*(\check{B}_n,\check{B}_{n-1})\ar[r]^(.54)\cong&H_*(\check{B}_n,\check{U})&H_*(\check{B}_n-\check{B}_{n-1},\check{U}-\check{B}_{n-1})\ar[l]^(.63)\cong}$$
where we may chose the basepoints of $B_n$ and $U$ in $U-B_{n-1}$ since $B$ is connected. Since each connected component of $B_n-B_{n-1}$ is contractible, $E_n-E_{n-1}$ is fiberwise homotopy equivalent to $(B_n-B_{n-1})\times F$ over $B_n-B_{n-1}$, and then so is also $\pi^{-1}(U)-E_{n-1}$ to $(U-B_{n-1})\times F$ over $U-B_{n-1}$. As in the 0-skeleton case, we see that $\Sigma r$ restricts to a homotopy retraction of the suspension of the fiber inclusion on each component of $\Sigma(B_n-B_{n-1})$. Then by the above trivial fibration case, we obtain that the map 
$$(\rho\vert_{\Sigma(E_n-E_{n-1})})_*:H_*(E_n-E_{n-1},\pi^{-1}(U)-E_{n-1})\to H_*(\check{B}_n-\check{B}_{n-1},\check{U}-\check{B}_{n-1})$$
is an isomorphism, hence so is the right $(\rho\vert_{\Sigma E_n})_*$ in \eqref{rho}. Thus by the five lemma, the middle $(\rho\vert_{\Sigma E_n})_*$ in \eqref{rho} is an isomorphism. We finally take the colimit to get that the map $\rho$ is an isomorphism in homology as desired, completing the proof.
\end{proof}

\begin{remark}
If we assume further that $F$ is of finite type, it immediately follows from the Leray-Hirsch theorem that the map $\rho$ is an isomorphism in cohomology with any field coefficient, implying that $\rho$ is an isomorphism in the integral homology by \cite[Corollary 3A.7]{H}.
\end{remark}

We now consider the diagonal arrangement $\Delta_K(X)$. Suppose that $2(\dim K+1)<m$, or equivalently, $2|\sigma|<m$ for any $\sigma\in K$. Then for $(x_1,\ldots,x_m)\in\Delta_K(X)$, there is unique $x\in X$ such that $x_{i_1}=\cdots=x_{i_k}=x$ with $i_1<\cdots<i_k$ and $2k>m$. Then by assigning such $x$ to $(x_1,\ldots,x_m)\in\Delta_K(X)$, we get a continuous map 
$$\pi:\Delta_K(X)\to X.$$
For $\tau\subset[m]$, let $X^\tau=\{(x_1,\ldots,x_m)\in X^m\,\vert\,x_i=*\text{ for }i\in[m]-\tau\}$, and we put
$$X^K=\bigcup_{\sigma\in K}X^\sigma$$
which is called the polyhedral product or the generalized moment-angle complex associated with the pair $(X,*)$ and $K$.
Observe that for $2(\dim K+1)<m$, we have $\pi^{-1}(*)=X^K$.

\begin{proposition}
\label{fibration}
If $X$ is a CW-complex and $2(\dim K+1)<m$, then $X^K\to\Delta_K(X)\xrightarrow{\pi}X$ is a homotopy fibration. 
\end{proposition}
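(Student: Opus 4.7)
The plan is to realize $\pi$ as a homotopy colimit of trivial fibrations indexed by the face poset of $K$, and then invoke Lemma~\ref{hocolim-fibration}.

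For each $\sigma \in K$, I would first observe that the assignment
$$\Delta_\sigma(X) \xrightarrow{\cong} X \times X^\sigma, \qquad (x_1,\ldots,x_m) \mapsto (x, (x_i)_{i\in\sigma}),$$
with $x$ the common value of the coordinates indexed by $[m]-\sigma$, is a homeomorphism identifying $\pi|_{\Delta_\sigma(X)}$ with the projection to the first factor. Thus $X^\sigma \hookrightarrow \Delta_\sigma(X) \xrightarrow{\pi} X$ is a trivial fiber bundle, in particular a homotopy fibration. As $\sigma$ ranges over $K$ regarded as a poset under inclusion, these fit together into a $K$-indexed diagram of homotopy fibrations with constant base $X$, and Lemma~\ref{hocolim-fibration} produces a homotopy fibration
$$\underset{\sigma\in K}{\mathrm{hocolim}}\, X^\sigma \longrightarrow \underset{\sigma\in K}{\mathrm{hocolim}}\, \Delta_\sigma(X) \longrightarrow X.$$

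Next I would identify each homotopy colimit with the corresponding set-theoretic union $X^K$ or $\Delta_K(X)$. The transition maps in both $K$-diagrams are closed inclusions of CW subcomplexes of $X^m$, and they satisfy the nerve-like intersection identities $X^\sigma \cap X^\tau = X^{\sigma \cap \tau}$ (immediate) and $\Delta_\sigma(X) \cap \Delta_\tau(X) = \Delta_{\sigma \cap \tau}(X)$. The second identity is precisely where the hypothesis $2(\dim K+1)<m$ enters: it forces $|\sigma|+|\tau|<m$, so $[m]-(\sigma\cup\tau)$ is nonempty, and any index in this set equates the two candidate diagonal values. Granted these compatibilities plus cofibrancy of the transitions, the canonical maps from the homotopy colimits to $X^K$ and $\Delta_K(X)$ are weak equivalences, and substitution yields the desired homotopy fibration.

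The step requiring the most care is this last identification. I would handle it by induction on a filtration of $K$ by faces in order of increasing dimension: when attaching a face $\sigma$ to the current subcomplex, the intersection property $\Delta_\sigma\cap\Delta_\tau=\Delta_{\sigma\cap\tau}$ guarantees that $\Delta_\sigma(X)$ is glued on along the CW subcomplex $\bigcup_{\tau\subsetneq\sigma}\Delta_\tau(X)$, and similarly for the $X^\sigma$-diagram. Each attachment is then a pushout along a cofibration, hence a homotopy pushout, so iterating the construction exhibits $\Delta_K(X)$ and $X^K$ as the homotopy colimits of their respective $K$-diagrams. This pinpoints where the cardinality hypothesis is essential and is the only nontrivial ingredient beyond the formal machinery of Lemma~\ref{hocolim-fibration}.
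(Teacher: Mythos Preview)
Your proof is correct and follows essentially the same route as the paper: identify each $\pi|_{\Delta_\sigma(X)}$ with a product projection, apply Lemma~\ref{hocolim-fibration} to the resulting $K$-diagram of trivial fibrations, and then replace the two homotopy colimits by the strict colimits $X^K$ and $\Delta_K(X)$. The only difference is one of detail: the paper justifies the last step by citing that all structure maps $\Delta_\sigma(X)\hookrightarrow\Delta_\tau(X)$ are cofibrations, while you go further and isolate the intersection identity $\Delta_\sigma(X)\cap\Delta_\tau(X)=\Delta_{\sigma\cap\tau}(X)$ (and hence the hypothesis $2(\dim K+1)<m$) as what makes the iterated pushouts homotopy pushouts---a clarification the paper leaves implicit.
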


\begin{proof}
For each $\sigma\in K$, the map $\pi\vert_\sigma:\Delta_\sigma(X)\to X$ is identified with the projection from the product of copies of $X$. Then it follows from Lemma \ref{hocolim-fibration} that 
$$\underset{K}{\mathrm{hocolim}}\,X^\sigma\to\underset{K}{\mathrm{hocolim}}\,\Delta_\sigma(X)\to X$$
is a homotopy fibration. Since the inclusions $X^\sigma\to X^\tau$ and $\Delta_\sigma(X)\to\Delta_\tau(X)$ for any $\sigma\subset\tau\subset[m]$ are cofibrations, we have
$$\underset{K}{\mathrm{hocolim}}\,X^\sigma\simeq\underset{K}{\mathrm{colim}}\,X^\sigma=X^K\quad\text{and}\quad\underset{K}{\mathrm{hocolim}}\,\Delta_\sigma(X)\simeq\underset{K}{\mathrm{colim}}\,\Delta_\sigma(X)=\Delta_K(X),$$
completing the proof.
\end{proof}

Put $\widehat{X}^K=\bigvee_{\emptyset\ne\sigma\in K}\widehat{X}^{|\sigma|}$. In \cite{BBCG}, it is proved that there is a homotopy equivalence
\begin{equation}
\label{BBCG}
\epsilon_X:\Sigma X^K\xrightarrow{\simeq}\Sigma\widehat{X}^K
\end{equation}
which is natural with respect to $X$, i.e. for a map $f:X\to Y$, the square diagram
$$\xymatrix{\Sigma X^K\ar[r]^\epsilon\ar[d]_{\Sigma f^K}&\Sigma\widehat{X}^K\ar[d]^{\Sigma\hat{f}^K}\\
\Sigma Y^K\ar[r]^\epsilon&\Sigma\widehat{Y}^K}$$
is homotopy commutative, where the vertical arrows are induced from $f$. 

\begin{proposition}
\label{retraction}
If $X$ is a CW-complex and $2(\dim K+1)<m$, the inclusion $j:X^K\to\Delta_K(X)$ has a homotopy retraction after a suspension.
\end{proposition}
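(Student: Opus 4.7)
The plan is to factor the inclusion $j:X^K\to\Delta_K(X)$ through the ambient cube $X^m$. Writing $\iota:\Delta_K(X)\hookrightarrow X^m$ for the obvious inclusion, the composite $\iota\circ j:X^K\hookrightarrow X^m$ is precisely the standard inclusion of the polyhedral product into the full $m$-fold product. It therefore suffices to produce a homotopy retraction $\rho:\Sigma X^m\to\Sigma X^K$ of $\Sigma(\iota\circ j)$; then $r:=\rho\circ\Sigma\iota$ will be a homotopy retraction of $\Sigma j$.

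To construct $\rho$, apply the BBCG equivalence \eqref{BBCG} both to $K$ and to the full simplex $\Delta^{m-1}$ on $[m]$, for which $X^{\Delta^{m-1}}=X^m$. This gives
$$\Sigma X^K\simeq\bigvee_{\emptyset\ne\sigma\in K}\Sigma\widehat{X}^{|\sigma|},\qquad \Sigma X^m\simeq\bigvee_{\emptyset\ne\sigma\subset[m]}\Sigma\widehat{X}^{|\sigma|}.$$
The crucial point is that under these equivalences, $\Sigma(\iota\circ j)$ is identified with the obvious inclusion of $K$-indexed summands into the full wedge, so that wedge projection onto the $K$-summands delivers $\rho$. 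This compatibility is built into the construction of $\epsilon_X$: the $\sigma$-summand of $\Sigma X^m$ is extracted by the smash projection $X^m\to\widehat{X}^{|\sigma|}$ onto the $\sigma$-coordinates, and this map sends $X^\tau$ to the basepoint whenever $\sigma\not\subset\tau$, since a $*$ factor then appears in the smash. Consequently on $\Sigma X^K$ only the summands indexed by $\sigma\in K$ survive.

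The main obstacle is not a deep geometric one but a bookkeeping issue: verifying (or citing precisely from \cite{BBCG}) that $\epsilon_X$ is natural with respect to subcomplex inclusions $K\hookrightarrow\Delta^{m-1}$, since the excerpt records naturality only in the space variable $X$. Granting that, the identity $r\circ\Sigma j\simeq\mathrm{id}_{\Sigma X^K}$ follows tautologically from $\rho$ being a homotopy retraction of $\Sigma(\iota\circ j)=\Sigma\iota\circ\Sigma j$. It is worth noting that this argument does not use the hypothesis $2(\dim K+1)<m$; that assumption enters the proof of Theorem~\ref{main} only through Proposition~\ref{fibration}.
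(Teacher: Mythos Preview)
Your argument is correct and takes a genuinely different route from the paper's. You embed $\Delta_K(X)$ into the ambient $X^m$ and retract $\Sigma X^m$ onto $\Sigma X^K$ via the BBCG splitting applied to the full simplex; this only requires naturality of the BBCG equivalence with respect to the subcomplex inclusion $K\hookrightarrow\Delta^{m-1}$, which (as you rightly flag) is not stated in the paper but is standard in \cite{BBCG} and follows from the description of $\epsilon_X$ you sketch. Your observation that the hypothesis $2(\dim K+1)<m$ plays no role here is also correct.

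The paper proceeds quite differently: it exploits that for an H-space $Y$ the multiplication map $Y\times Y^K\to\Delta_K(Y)$ trivializes the homotopy fibration of Proposition~\ref{fibration}, giving an unsuspended retraction $r':\Delta_K(Y)\to Y^K$; it then specializes to $Y=\Omega\Sigma X$, maps $\Delta_K(X)$ there via the James suspension $E:X\to\Omega\Sigma X$, and retracts back to $\Sigma X^K$ using a left inverse of $\Sigma E$ together with the naturality of $\epsilon$ in the space variable. Your approach is more elementary and economical: it avoids loop spaces and the fibration entirely and uses naturality of BBCG in $K$ rather than in $X$. The paper's route, on the other hand, stays closer to the fibration picture that drives the rest of the proof, and yields the slightly stronger intermediate fact that for H-spaces the inclusion $Y^K\to\Delta_K(Y)$ already splits before suspending.
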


\begin{proof}
Let $E:X\to\Omega\Sigma X$ be the suspension map. Since $\Sigma E$ has a retraction, we easily see that the induced map $\Sigma\widehat{E}^K:\Sigma\widehat{X}^K\to\Sigma\widehat{\Omega\Sigma X}{}^K$ has a retraction, say $r$. If $Y$ is an H-space, the map
$$Y\times Y^K\to\Delta_K(Y),\quad(y,(y_1,\ldots,y_m))\mapsto(y\cdot y_1,\ldots,y\cdot y_m)$$
is a map between homotopy fibrations with common base and fiber, and then is a weak homotopy equivalence. Hence if $Y$ has the homotopy type of a CW-complex, the map is a homotopy equivalence, implying that there is a homotopy retraction $r':\Delta_K(Y)\to Y^K$ of the inclusion $j:Y^K\to\Delta_K(Y)$. Combining the above maps, we get a homotopy commutative diagram
$$\xymatrix{\Sigma\widehat{X}^K\ar@{=}[r]\ar@{=}[dd]&\Sigma\widehat{X}^K\ar[r]^{\epsilon^{-1}}\ar[dd]^{\Sigma\widehat{E}^K}&\Sigma X^K\ar[r]^(.45){\Sigma j}\ar[d]^{\Sigma E^K}&\Sigma\Delta_K(X)\ar[d]^{\Sigma\Delta_K(E)}\\
&&\Sigma(\Omega\Sigma X)^K\ar[r]^(.45){\Sigma j}\ar@{=}[d]&\Sigma\Delta_K(\Omega\Sigma X)\ar@{=}[d]\\
\Sigma\widehat{X}^K&\Sigma\widehat{\Omega\Sigma X}{}^K\ar[l]_r&\Sigma(\Omega\Sigma X)^K\ar[l]_\epsilon&\Sigma\Delta_K(\Omega\Sigma X)\ar[l]_{\Sigma r'}}$$
where $\Delta_K(E):\Delta_K(X)\to\Delta_K(\Omega\Sigma X)$ is induced from $E$. Thus the composite
$$\Sigma\Delta_K(X)\xrightarrow{\Sigma\Delta_K(E)}\Sigma\Delta_K(\Omega\Sigma X)\xrightarrow{\Sigma r'}\Sigma(\Omega\Sigma X)^K\xrightarrow{\epsilon}\Sigma\widehat{\Omega\Sigma X}{}^K\xrightarrow{r}\Sigma\widehat{X}^K\xrightarrow{\epsilon^{-1}}\Sigma X^K$$
is the desired homotopy retraction.
\end{proof}

\begin{proof}
[Proof of Theorem \ref{main}]
If $2(\dim K+1)<m$, there is a homotopy fibration $X^K\to\Delta_K(X)\to X$, where the fiber inclusion has a homotopy retraction after a suspension by Proposition \ref{retraction}.
Then by Lemma \ref{decomp}, we get a homotopy equivalence
$$\Sigma\Delta_K(X)\simeq\Sigma X\vee\Sigma X^K\vee\Sigma(X\wedge X^K).$$
Therefore the proof is completed by \eqref{BBCG}.
\end{proof}

\begin{proof}
[Proof of Corollary \ref{euler-char}]
Since $X$ is a compact manifold, $\Delta_K(X)$ is a compact, locally contractible subset of an $mn$-manifold $X^m$. Then by the Poincar\'e-Alexander duality \cite[Proposition 3.46]{H}, there is an isomorphism 
$$H_i(X^m,\M_K(X);\ZZ/2)\cong H^{mn-i}(\Delta_K(X);\ZZ/2),$$
implying that $\chi(X^m,\M_K(X))=(-1)^{mn}\chi(\Delta_K(X))$.
Thus since $\chi(\widehat{X}^k)=(\chi(X)-1)^k+1$ for $k\ge 1$, it follows from Theorem \ref{main} that
$$\chi(X^m,\M_K(X))=(-1)^{mn}\chi(X)(1+\sum_{\emptyset\ne\sigma\in K}(\chi(X)-1)^{|\sigma|}).$$
Therefore the proof is completed by the equality $\chi(X^m)=\chi(X^m,\M_K(X))+\chi(\M_K(X))$.
\end{proof}

\end{document}